\documentclass{article}
\usepackage{amsmath,amsthm,amssymb,mathrsfs,amstext}
\theoremstyle{plain}
\newtheorem{thm}{Theorem}
\newtheorem{lem}[thm]{Lemma}
\theoremstyle{definition}
\newtheorem{defn}[thm]{Definition}
\newtheorem{fact}[thm]{Fact}
\newtheorem{prop}[thm]{Proposition}
\providecommand{\keywords}[1]{\textbf{\textit{Keywords:}} #1}
\providecommand{\subjectclass}[2]{\textbf{Mathematics subject classification:} #1}

\title{Nonstandard proof of combined sum and product structure
in IP$^{*}$ sets \footnote{\keywords{IP$^{*}$ sets, Nonstandard analysis}}}

\date{}
\author{Sayan Goswami
\footnote{Department of Mathematics, 
           University of Kalyani, 
           Kalyani-741235,
           Nadia, West Bengal, India
           {\tt sayan92m@gmail.com}}
}

\begin{document}
\maketitle

\begin{abstract}
\noindent V. Bergelson and N. Hindman proved that $IP^{*}$ sets
contain all possible finite sum and product of a sum subsystem of
any sequence in $\mathbb{N}$. In this article, we will prove this
result using Nonstandard analysis.

\end{abstract}

\noindent \subjectclass{05D10, 26E35}

\section{Introduction}
\noindent Ramsey theoretic study is deeply involved with the study of topological
dynamics, ergodic theory and algebra of Stone-\v{C}ech compactification
of discrete semigroup. $IP$ sets play a major role in Ramsey theory.
A subset of $\mathbb{N}$ is said to be an $IP$ set if it contains
a set of the form $FS\langle x_{n}\rangle_{n=1}^{\infty}$, the all
possible finite sum of some sequence $\langle x_{n}\rangle_{n=1}^{\infty}$.
In \cite{key-2}, it was proved that for any finite partition of $\mathbb{N}$,
there is one cell which is an $IP$ set. Similarly $FP\langle x_{n}\rangle_{n=1}^{\infty}$
is defined for products. An $IP^{*}$ set is a set which intersects
all possible $IP$ set. Given a sequence $\langle x_{n}\rangle_{n=1}^{\infty}$
in $\mathbb{N}$, we say that $\langle y_{n}\rangle_{n=1}^{\infty}$
is a sum subsystem of $\langle x_{n}\rangle_{n=1}^{\infty}$ provided
there exists a sequence $\langle H_{n}\rangle_{n=1}^{\infty}$ of
non-empty finite subset such that $\text{max}H_{n}<\text{min}H_{n+1}$
and $y_{n}=\sum_{t\in H_{n}}x_{t}$ for each $n\in\mathbb{N}$. In
\cite{key-1}, the following result has been proved.

\begin{thm}
{\label{Main} }Let $\langle x_{n}\rangle_{n=1}^{\infty}$
be a sequence in $\mathbb{N}$ and $A$ be $IP^{*}$ set in $\left(\mathbb{N},+\right)$.
Then there exists a subsystem $\langle y_{n}\rangle_{n=1}^{\infty}$
of $\langle x_{n}\rangle_{n=1}^{\infty}$ such that 
\[
FS\left(\langle y_{n}\rangle_{n=1}^{\infty}\right)\cup FP\left(\langle y_{n}\rangle_{n=1}^{\infty}\right)\subseteq A.
\]
\end{thm}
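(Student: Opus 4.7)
The plan is to work in a sufficiently saturated nonstandard extension ${}^*\mathbb{N}$ and exploit the standard correspondence $\alpha \mapsto u(\alpha) := \{A \subseteq \mathbb{N} : \alpha \in {}^*A\}$ between hyperintegers and ultrafilters on $\mathbb{N}$. An additive idempotent ultrafilter corresponds to a hyperinteger $\alpha$ admitting a ``tensor copy'' $\beta$ with $u(\alpha) = u(\beta) = u(\alpha + \beta)$, and $A$ is $IP^{*}$ iff $\alpha \in {}^*A$ for every such $\alpha$. A preliminary observation, needed to accommodate the product structure, is that $\pi^{-1}A := \{m \in \mathbb{N} : \pi m \in A\}$ is $IP^{*}$ whenever $A$ is and $\pi \in \mathbb{N}$; this holds because left multiplication by the principal ultrafilter at $\pi$ is an additive homomorphism of $(\beta\mathbb{N},+)$, so it sends additive idempotents to additive idempotents.

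First I would produce an additive idempotent $\alpha$ concentrated on the given sequence, meaning $FS(\langle x_n\rangle_{n=m}^{\infty}) \in u(\alpha)$ for every standard $m$---equivalently, $\alpha = \sum_{t \in H}{}^{*}x_t$ for some hyperfinite $H \subseteq {}^*\mathbb{N} \setminus \mathbb{N}$. This follows from Ellis's idempotent lemma applied to the compact subsemigroup $\bigcap_{m}\overline{FS(\langle x_n\rangle_{n=m}^{\infty})}$ of $(\beta\mathbb{N}, +)$, read through the nonstandard correspondence.

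Next I would build $\langle y_n\rangle$ by induction, maintaining as invariant that $A - \sigma \in u(\alpha)$ for every $\sigma \in FS(\langle y_i\rangle_{i \le n}) \cup \{0\}$. Having chosen $y_1 < \dots < y_n$ as standard-finite block sums with disjoint supports in $\mathbb{N}$, I would consider
\[
B_n \;=\; \bigcap_{\sigma \in FS(\langle y_i\rangle_{i \le n})\cup\{0\}} \{a \in A - \sigma : (A - \sigma) - a \in u(\alpha)\} \;\cap\; \bigcap_{\pi \in FP(\langle y_i\rangle_{i \le n})\cup\{1\}} \pi^{-1}A.
\]
The first intersection lies in $u(\alpha)$ by the idempotent identity ``$A' \in u(\alpha) \Rightarrow \{a \in A' : A' - a \in u(\alpha)\} \in u(\alpha)$'' applied to each $A' = A - \sigma$; the second lies in $u(\alpha)$ by the preliminary observation. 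Hence $\alpha \in {}^*B_n$, and combining with $\alpha \in {}^*FS(\langle x_n\rangle_{n > \max H_n}^{\infty})$, transfer delivers a standard-finite block $H_{n+1}$ beyond $\max H_n$ such that $y_{n+1} := \sum_{t \in H_{n+1}} x_t \in B_n$. Membership in the first intersection realizes all new sums in $A$ and preserves the invariant; membership in the second realizes all new products in $A$.

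The main obstacle is the iterated idempotency bookkeeping on the additive side---keeping $A - \sigma \in u(\alpha)$ for every current partial sum requires repeated applications of the idempotent identity and is the heart of the nonstandard argument. Once that is in place, the multiplicative structure is handled essentially ``for free'' by the stability of $IP^{*}$ under $A \mapsto \pi^{-1}A$, and Theorem \ref{Main} follows.
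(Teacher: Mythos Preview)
Your proposal is correct and follows essentially the same strategy as the paper: pick a $u$-idempotent $\alpha$ in $\bigcap_{m}{}^{*}FS(\langle x_{n}\rangle_{n\ge m})$, maintain the additive invariant $t+\alpha\in{}^{*}A$ (equivalently $A-t\in u(\alpha)$) for every current partial sum $t$, and handle products via the $IP^{*}$-stability of $A\mapsto\pi^{-1}A$, which is exactly the paper's Lemma~\ref{lem}/Fact~\ref{fact}. The only cosmetic difference is that you phrase the idempotent bookkeeping through the star-set $\{a\in A': A'-a\in u(\alpha)\}$, whereas the paper expresses the same step via the iterated extension $\alpha+{}^{*}\alpha\in{}^{**}A$ and transfers down.
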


In this article we will prove this theorem using the methods of Nonstandard
analysis. Here we recall some basics of this topic. For details readers
are invited to see the recent book \cite{key-1-1}. Nonstandard analysis
is the study of hypernatural numbers, which correlates the ultrafilters
over $\mathbb{N}$. Here we work inside a \textquotedblleft universe\textquotedblright{}
$\mathbb{V}$, equipped with a star map $*:\mathbb{V}\rightarrow\mathbb{V}$
which assigns every object $M\in\mathbb{V}$ to its nonstandard extension
$^{*}M$, in such a way that the following properties hold:
\begin{prop}
For any semigroup $\left(S,\cdot\right)$ the following properties
of the $*$ map can be found in \cite{key-1-1}.
\begin{enumerate}
\item For a set $A$, $^{*}A$, is also a set, and $^{\sigma}\!A\subseteq^{*}\!\!A$,
where $^{\sigma}\!A=\left\{ ^{*}x:x\in A\right\} $. The inclusion
is strict iff $A$ is infinite;
\item If $A,B$ are sets such that $A\subseteq B$, then $^{*}\!A\subseteq\,^{*}\!B$;
\item For a set $A$ and $k\in\mathbb{N}$ we have $^{*}\!\left(A^{k}\right)\subseteq\left(\,^{*}\!A\right)^{k}$;
\item If $f:A\rightarrow B$ is a function, $^{*}f:{}^{*}\!A\rightarrow^{*}\!B$
is a function;
\item If $f:A\rightarrow B$ is a function and $x\in A,$ then $^{*}f\left(x\right)=f(x)$;
\item If $f:A\rightarrow B$ is a function and $x\in A,$ then $^{*}f\left(^{*}x\right)=^{*}\left(f(x)\right)$;
\item If $s\in S$, then $^{*}s=s$;
\item Transfer principle: For any elementary formulae $\varphi\left(x_{1},x_{2},\ldots,x_{m}\right)$
and objects $M_{1},M_{2},\ldots,M_{n}\in\mathbb{V},$ 
\[
\varphi\left(M_{1},M_{2},\ldots,M_{n}\right)\text{ holds iff }\varphi\left(^{*}M_{1},^{*}M_{2},\ldots,^{*}M_{n}\right)\text{ holds}
\]
\end{enumerate}
\end{prop}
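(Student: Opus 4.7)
The plan is to construct the star map $* : \mathbb{V} \to \mathbb{V}$ via a superstructure ultrapower and then verify each listed clause either directly from the construction or as a consequence of the transfer principle, which is the single deep ingredient. Fix a nonprincipal, countably incomplete ultrafilter $\mathcal{U}$ on an index set $I$, and for each base set $M \in \mathbb{V}$ let $^{*}M$ be the set of equivalence classes $[f]_{\mathcal{U}}$ of functions $f : I \to M$ modulo $\mathcal{U}$-almost-everywhere equality. For a standard $x$ define $^{*}x$ to be the class of the constant function $i \mapsto x$. This immediately gives clause (7) (since semigroup elements $s$ are identified with their constant class), and, when applied to the graph of a function, clauses (5) and (6) for arguments that are themselves standard.

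The heart of the argument is the transfer principle (8), which I would prove as \L{}o\'s's theorem: by induction on the complexity of a bounded formula $\varphi$, one shows that $\varphi(\,^{*}M_1,\ldots,\,^{*}M_n)$ holds in the ultrapower iff $\{\,i \in I : \varphi(M_1,\ldots,M_n)\,\} \in \mathcal{U}$. Because the $M_j$ are standard constants this index set is either $I$ or $\emptyset$, producing the biconditional in the statement. The atomic and Boolean cases are mechanical; the quantifier step is where one trades $\forall/\exists$ for $\mathcal{U}$-a.e., using only the ultrafilter axioms.

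Once transfer is in hand, clauses (2)--(4) reduce to one-line applications: transfer the formulas ``$\forall x\,(x \in A \to x \in B)$'', ``$A^{k} = \{(a_1,\ldots,a_k) : \bigwedge a_j \in A\}$'', and ``$f$ is a function on $A$'', respectively. The inclusion half of (1) is immediate from the constant-class embedding. The strictness half, that $^{\sigma}A \subsetneq {}^{*}A$ whenever $A$ is infinite, requires a diagonal/saturation input: for $A$ infinite, choose a function $f : I \to A$ whose values along a decreasing chain in $\mathcal{U}$ avoid each fixed element of $A$, producing $[f]_{\mathcal{U}} \in {}^{*}A \setminus {}^{\sigma}A$. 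The converse (equality when $A$ is finite) follows by transferring ``$A$ has exactly $n$ elements''.

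The main obstacle is precisely this strictness-and-saturation portion of (1), since the naive ultrapower only gives $\aleph_1$-saturation when $\mathcal{U}$ is countably incomplete, and the claim is meant to hold for every infinite $A$ in $\mathbb{V}$. I would address this by restricting attention to ultrafilters on $\mathbb{N}$ for the explicit diagonal argument, or, if one wants uniform saturation over all infinite $A \in \mathbb{V}$ simultaneously, by invoking an iterated or limit ultrapower with the required degree of saturation built in. All remaining clauses then reduce to transfer plus the identification of standard objects with their constant classes.
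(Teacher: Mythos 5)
The paper offers no proof of this proposition at all: it is stated as imported background, with the properties explicitly attributed to \cite{key-1-1}, so there is no internal argument to compare yours against. Your proposal --- build the star map as a (bounded) ultrapower and derive every clause from \L{}o\'s's theorem --- is precisely the construction used in the cited reference, and in outline it is correct: clauses (2)--(6) are indeed one-line transfers once (8) is established, and equality $^{\sigma}\!A={}^{*}\!A$ for finite $A$ follows by transferring the formula enumerating the elements of $A$. Two corrections of detail. First, your ``main obstacle'' is not an obstacle: no saturation of any kind is needed for the strictness half of clause (1). Take $I=\mathbb{N}$ and $\mathcal{U}$ nonprincipal (hence countably incomplete); given any infinite $A$, of whatever cardinality, choose distinct elements $a_{1},a_{2},\ldots\in A$ and set $f(n)=a_{n}$. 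For each fixed $a\in A$ the set $\left\{ n:f(n)=a\right\}$ has at most one element, hence is not in $\mathcal{U}$, so $[f]_{\mathcal{U}}\in{}^{*}\!A\setminus{}^{\sigma}\!A$ directly; the appeal to iterated or limit ultrapowers is superfluous ($\aleph_{1}$-saturation does matter elsewhere in this paper, e.g.\ for compactness of $^{*}S$ in the $u$-topology, but not for this clause). Second, be careful with (7) and (8): in the plain ultrapower $^{*}s=[c_{s}]_{\mathcal{U}}$ is not literally equal to $s$, so to obtain the stated identity you must work with the bounded ultrapower of the superstructure over $S$ and apply the Mostowski collapse, which fixes the atoms; correspondingly, ``elementary formulae'' in (8) must be read as bounded-quantifier formulas, since unrestricted transfer over the superstructure is false. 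With those two standard repairs, your sketch is the canonical proof that the paper's citation points to.
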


Let us first give a brief review of algebraic structure of the Stone-\v{C}ech
compactification of any discrete semigroup $S$.

The set $\{\overline{A}:A\subset S\}$ is a basis for the closed sets
of $\beta S$. The operation `$\cdot$' on $S$ can be extended to
the Stone-\v{C}ech compactification $\beta S$ of $S$ so that$(\beta S,\cdot)$
is a compact right topological semigroup (meaning that for any \ 
is continuous) with $S$ contained in its topological center (meaning
that for any $x\in S$, the function $\lambda_{x}:\beta S\rightarrow\beta S$
defined by $\lambda_{x}(q)=x\cdot q$ is continuous). This is a famous
Theorem due to Ellis that if $S$ is a compact right topological semigroup
then the set of idempotents $E\left(S\right)\neq\emptyset$. A nonempty
subset $I$ of a semigroup $T$ is called a $\textit{left ideal}$
of $S$ if $TI\subset I$, a $\textit{right ideal}$ if $IT\subset I$,
and a $\textit{two sided ideal}$ (or simply an $\textit{ideal}$)
if it is both a left and right ideal. A $\textit{minimal left ideal}$
is the left ideal that does not contain any proper left ideal. Similarly,
we can define $\textit{minimal right ideal}$ and $\textit{smallest ideal}$.

Any compact Hausdorff right topological semigroup $T$ has the smallest
two sided ideal

\[
\begin{array}{ccc}
K(T) & = & \bigcup\{L:L\text{ is a minimal left ideal of }T\}\\
 & = & \,\,\,\,\,\bigcup\{R:R\text{ is a minimal right ideal of }T\}.
\end{array}
\]

Given a minimal left ideal $L$ and a minimal right ideal $R$, $L\cap R$
is a group, and in particular contains an idempotent. If $p$ and
$q$ are idempotents in $T$ we write $p\leq q$ if and only if $pq=qp=p$.
An idempotent is minimal with respect to this relation if and only
if it is a member of the smallest ideal $K(T)$ of $T$. Given $p,q\in\beta S$
and $A\subseteq S$, $A\in p\cdot q$ if and only if the set $\{x\in S:x^{-1}A\in q\}\in p$,
where $x^{-1}A=\{y\in S:x\cdot y\in A\}$. See \cite{key-3} for an
elementary introduction to the algebra of $\beta S$ and for any unfamiliar
details.

Now we correlate the ultrafilters and nonstandard objects.
\begin{defn}
Elements $\alpha\in^{*}\!S$ generate ultrafilters on S via the ultrafilter
map $\alpha\rightarrow\mathcal{U_{\alpha}}$ where $\mathcal{U}_{\alpha}=\left\{ A\subseteq S:\alpha\in^{*}\!A\right\} $.
\end{defn}

Two elements $\alpha,\beta$ are called $u$-equivalent iff $\mathcal{U}_{\alpha}=\mathcal{U}_{\beta}$.
\begin{prop}
$\sim$ is an equivalence relation $^{*}\!S$.
\end{prop}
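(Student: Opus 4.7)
The proposition asserts that the relation $\sim$ defined by $\alpha \sim \beta \iff \mathcal{U}_\alpha = \mathcal{U}_\beta$ is an equivalence relation on ${}^*S$. My plan is simply to verify reflexivity, symmetry, and transitivity, each of which will reduce directly to the corresponding property of equality of sets (in this case, equality of ultrafilters on $S$).

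The approach is to observe that the assignment $\alpha \mapsto \mathcal{U}_\alpha$ is a well-defined function from ${}^*S$ to $\beta S$, and that $\sim$ is precisely the pullback along this map of the equality relation on $\beta S$. In general, the pullback of any equivalence relation under any function is again an equivalence relation, so the result is essentially a formality. Concretely, for reflexivity I would note that $\mathcal{U}_\alpha = \mathcal{U}_\alpha$ trivially holds, so $\alpha \sim \alpha$. For symmetry, if $\mathcal{U}_\alpha = \mathcal{U}_\beta$ then $\mathcal{U}_\beta = \mathcal{U}_\alpha$, giving $\beta \sim \alpha$. For transitivity, if $\mathcal{U}_\alpha = \mathcal{U}_\beta$ and $\mathcal{U}_\beta = \mathcal{U}_\gamma$, then $\mathcal{U}_\alpha = \mathcal{U}_\gamma$, hence $\alpha \sim \gamma$.

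There is no real obstacle here; the statement is essentially a definitional unpacking. The only thing worth double-checking is that $\mathcal{U}_\alpha$ is genuinely a well-defined ultrafilter on $S$ for every $\alpha \in {}^*S$, so that the defining condition $\mathcal{U}_\alpha = \mathcal{U}_\beta$ makes sense as a set-equality. This is standard in the nonstandard framework set up in the preceding paragraphs: the system $\{A \subseteq S : \alpha \in {}^*A\}$ is closed under supersets by property (2), closed under finite intersections via the transfer principle applied to $(A \cap B) = A \cap B$, and satisfies the ultra condition because for each $A \subseteq S$ one has $\alpha \in {}^*A$ or $\alpha \in {}^*(S \setminus A) = {}^*S \setminus {}^*A$, again by transfer. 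Once this is in place, the equivalence relation claim is immediate.
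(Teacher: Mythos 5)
Your proof is correct: since $\alpha\sim\beta$ is defined by the set-equality $\mathcal{U}_\alpha=\mathcal{U}_\beta$, the relation is the pullback of equality along the map $\alpha\mapsto\mathcal{U}_\alpha$ and is therefore trivially reflexive, symmetric, and transitive, and your verification that each $\mathcal{U}_\alpha$ is a genuine ultrafilter is a sensible (if strictly optional) sanity check. The paper itself offers no proof of this proposition, merely attributing it to Di Nasso and pointing to the references, so your argument supplies exactly the routine verification that was omitted.
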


The relation $\sim$ is an equivalence relation on $^{*}\!S$ was
first considered by Di Nasso in \cite{key-1-0-1}. For details see
\cite{key-1-1}.
\begin{defn}
For any semigroup $S$, equip $^{*}S$ with the $u$-topology, that
generated by the basic open sets $^{*}A$ for $A\subseteq S$. We
say $^{*}S$ is a compact $u$-semigroup, i.e.
\begin{enumerate}
\item $^{*}S$ is compact;
\item For any $\alpha,\beta\in^{*}S$ there is $\gamma\in^{*}S$ such that
$\gamma\sim\alpha+^{*}\beta$;
\item The map $\alpha\rightarrow\alpha+^{*}\beta$ is continuous.
\end{enumerate}
\end{defn}

\noindent The quotient set $^{*}S/\sim$ and $\beta S$ are homeomorphic.
As is a closed set, it contains idempotents. For each idempotent $\alpha\in{}^{*}S/\sim$,
$\alpha+\,^{*}\!\alpha\sim\alpha$. Now each idempotent in $\beta S$
is correspond to an idempotent $\alpha\in{}^{*}S/\sim$ and so if
$A$ is an $IP^{*}$ set, $\alpha\in\,^{*}\!A$ for all $u$-idempotent
$\alpha$ in $^{*}S/\sim$.

\section{Our results}
\begin{lem}
\label{lem}Let $A$ be an $IP^{*}$set in $\left(\mathbb{N},+\right)$
then for any $n\in\mathbb{N}$, $n^{-1}A$ is also $IP^{*}$ set in
$\left(\mathbb{N},+\right)$.
\end{lem}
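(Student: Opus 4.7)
The plan is to exploit the nonstandard characterization recalled in the excerpt: $A$ is $IP^{*}$ in $(\mathbb{N},+)$ iff $\alpha \in {}^{*}A$ for every additive $u$-idempotent $\alpha \in {}^{*}\mathbb{N}$. Interpreting $n^{-1}A = \{y \in \mathbb{N} : ny \in A\}$ (the multiplicative preimage, which is the form needed for Theorem~\ref{Main}), transfer gives ${}^{*}(n^{-1}A) = \{\beta \in {}^{*}\mathbb{N} : n\beta \in {}^{*}A\}$. So to prove $n^{-1}A$ is $IP^{*}$, I would fix an arbitrary additive $u$-idempotent $\alpha$ and try to show $n\alpha \in {}^{*}A$.

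The first step, and the heart of the argument, is the auxiliary observation that multiplication by a standard $n$ descends to the $u$-quotient: if $\beta \sim \gamma$, then $n\beta \sim n\gamma$. This is immediate from transfer applied to the preimage identity $n\delta \in {}^{*}C \Longleftrightarrow \delta \in {}^{*}\{y : ny \in C\}$, since the right-hand side only depends on $\mathcal{U}_{\delta}$; thus $\mathcal{U}_{\beta} = \mathcal{U}_{\gamma}$ forces $\mathcal{U}_{n\beta} = \mathcal{U}_{n\gamma}$.

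With this in hand, I would show that $n\alpha$ is itself an additive $u$-idempotent. Because ${}^{*}n = n$ (by Property 7) and multiplication distributes over addition in $\mathbb{N}$, transfer yields
\[
(n\alpha) + {}^{*}(n\alpha) \;=\; n\alpha + n\,{}^{*}\alpha \;=\; n(\alpha + {}^{*}\alpha).
\]
Since $\alpha + {}^{*}\alpha \sim \alpha$ by hypothesis and the $\sim$-compatibility of multiplication by $n$ just established gives $n(\alpha + {}^{*}\alpha) \sim n\alpha$, we conclude $(n\alpha) + {}^{*}(n\alpha) \sim n\alpha$, i.e.\ $n\alpha$ is a $u$-idempotent. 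The $IP^{*}$ hypothesis on $A$ then forces $n\alpha \in {}^{*}A$, which is precisely the statement $\alpha \in {}^{*}(n^{-1}A)$, finishing the proof.

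The main obstacle is really just the compatibility step $\beta \sim \gamma \Rightarrow n\beta \sim n\gamma$; this is the point where the multiplicative structure of $\mathbb{N}$ has to interact with the purely additive $u$-idempotent condition on $\alpha$, and it must be argued carefully at the level of generating ultrafilters rather than by a naive appeal to transfer. Once it is in place, the distributive identity $(n\alpha) + {}^{*}(n\alpha) = n(\alpha + {}^{*}\alpha)$ does all the remaining work, and no further Ramsey-theoretic or combinatorial input is required.
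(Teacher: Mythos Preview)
Your proposal is correct and follows essentially the same route as the paper: fix an arbitrary $u$-idempotent $\alpha$, observe that $n\alpha$ is again a $u$-idempotent, apply the $IP^{*}$ hypothesis to get $n\alpha\in{}^{*}A$, and translate this to $\alpha\in{}^{*}(n^{-1}A)$. In fact you supply more detail than the paper, which simply asserts that $n\alpha$ is a $u$-idempotent and cites \cite[Exercise~3.2]{key-1-1} for the equivalence $A\in\mathcal{U}_{n\alpha}\Leftrightarrow n^{-1}A\in\mathcal{U}_{\alpha}$, whereas you justify both steps via the $\sim$-compatibility of multiplication by a standard $n$ and the distributive identity $(n\alpha)+{}^{*}(n\alpha)=n(\alpha+{}^{*}\alpha)$.
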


\begin{proof}
Let $\alpha\in^{*}\mathbb{N}$ be any $u$-idempoptent. As $A$ is
an $IP^{*}$ set, $A\in\mathcal{U}_{\alpha}.$ As $\alpha$ is an
idempotent, $n\alpha$ is also an $u$-idempoptent for all $n\in\mathbb{N}$.
As $A$ is an $IP^{*}$set, $A\in\mathcal{U}_{n\alpha}$ for all $n\in\mathbb{N}$.
But then $n^{-1}A\in\mathcal{U}_{\alpha}$ \cite[Exercise 3.2, page 45]{key-1-1}.
As this holds for all $u$-idempoptent $\alpha$, we have $n^{-1}A$
is an $IP^{*}$ set.
\end{proof}

\begin{fact}
\label{fact} In the above proof, we have seen that if $A$ is an
$IP^{*}$ set, then $n\alpha\in\,^{*}\!A$ for all $n\in\mathbb{N}$,
and $u$-idempotent $\alpha$.
\end{fact}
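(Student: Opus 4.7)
The plan is to extract the intermediate observation that is already implicit in the proof of Lemma~\ref{lem}. First, I would establish that if $\alpha\in{}^{*}\mathbb{N}$ is a $u$-idempotent, then for each $n\in\mathbb{N}$ the element $n\alpha$ is again a $u$-idempotent. Since $f_{n}:x\mapsto nx$ is a $+$-homomorphism on $\mathbb{N}$, by transfer ${}^{*}f_{n}$ is a $+$-homomorphism on ${}^{*}\mathbb{N}$, and because $n$ is standard we have ${}^{*}(n\alpha)=n\cdot{}^{*}\alpha$. Combining this with the defining relation $\alpha+{}^{*}\alpha\sim\alpha$ of a $u$-idempotent gives
\[
n\alpha+{}^{*}(n\alpha)\;=\;n(\alpha+{}^{*}\alpha)\;\sim\;n\alpha,
\]
so that $n\alpha$ is itself a $u$-idempotent.

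Second, by the correspondence recalled at the end of the introduction, every $u$-idempotent of ${}^{*}\mathbb{N}$ lies in ${}^{*}A$ whenever $A$ is an $IP^{*}$ set. Applied to the $u$-idempotent $n\alpha$, this yields $n\alpha\in{}^{*}A$, which is precisely the statement of the Fact.

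The one point that needs separate verification is that $\sim$ is preserved under multiplication by a standard integer, i.e.\ that $\beta\sim\gamma$ implies $n\beta\sim n\gamma$. This is immediate from the ultrafilter definition: for any $B\subseteq\mathbb{N}$, $B\in\mathcal{U}_{n\beta}$ iff $\beta\in{}^{*}(n^{-1}B)$ iff $n^{-1}B\in\mathcal{U}_{\beta}$, so $\mathcal{U}_{n\beta}$ depends on $\beta$ only through its equivalence class. I do not anticipate any further obstacle beyond this routine check, and indeed the Fact is simply an explicit record, for later reference in the main theorem, of what was already produced along the way in the proof of Lemma~\ref{lem}.
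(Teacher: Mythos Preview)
Your proposal is correct and follows exactly the route the paper takes: the Fact is not given a separate proof in the paper but is simply extracted from the proof of Lemma~\ref{lem}, where it is observed that $n\alpha$ is a $u$-idempotent and hence, since $A$ is $IP^{*}$, one has $A\in\mathcal{U}_{n\alpha}$, i.e.\ $n\alpha\in{}^{*}A$. Your write-up actually supplies more detail than the paper does---the paper merely asserts that $n\alpha$ is a $u$-idempotent, whereas you justify it via the homomorphism $f_{n}$ and the compatibility of $\sim$ with multiplication by a standard integer---but the underlying argument is the same.
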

\noindent In the following proof let us denote the set of all finite subsets of $\mathbb{N}$ by $\mathcal{P}_{f}\left(\mathbb{N}\right)$.
\begin{proof}[\textbf{\textit{Proof of Theorem \ref{Main}:}}]
 Let, $B=FS\langle x_{n}\rangle_{n=1}^{\infty}$ and choose an $u$-idempotent
$\alpha\in\,^{*}\!B.$ As $A$ is an $IP^{*}$ set, $\alpha\in\,^{*}\!A$.
Now $\alpha\sim\alpha+\,^{*}\!\alpha$, $\alpha+\,^{*}\!\alpha\in\,^{**}\!A$.
So, by transfer choose $y_{1}=\sum_{t\in H_{1}}x_{t}$ such that,
\[
y_{1}\in A
\]
\[
y_{1}+\alpha\in\,^{*}\!A.
\]

From the above equation, and fact \ref{fact} we have,
\[
y_{1}\in A
\]
\[
\alpha\in\,^{*}\!A
\]
\[
y_{1}+\alpha\in\,^{*}\!A
\]
\[
y_{1}+\,^{*}\!\alpha\in\,^{**}\!A\left(\text{by transfer principle}\right)
\]
\[
\alpha+\,^{*}\!\alpha\in\,^{**}\!A
\]
\[
y_{1}+\alpha+\,^{*}\!\alpha\in\,^{**}\!A
\]
\[
y_{1}\alpha\in\,^{*}\!A.
\]
Again by transfer principle, choose $H_{2}\in\mathcal{P}_{f}\left(\mathbb{N}\right)$
where $\text{max}H_{1}<\text{min}H_{2}$, $y_{2}=\sum_{t\in H_{2}}x_{t}$
such that

\[
y_{1}\in A
\]
\[
y_{2}\in A
\]
\[
y_{1}+y_{2}\in A
\]
\[
y_{1}+\alpha\in\,^{*}\!A
\]
\[
y_{2}+\alpha\in\,^{*}\!A
\]
\[
y_{1}+y_{2}+\alpha\in\,^{*}\!A
\]
\[
y_{1}y_{2}\in A.
\]
Now by induction, assume that for $m\in\mathbb{N}$, we have already
chosen $\left(H_{i}\right)_{i=1}^{m}$ such that $\text{max}H_{i}<\text{min}H_{j}$
if $i,j\in\left\{ 1,2,\ldots,m\right\} $ and $i<j$ and $FS\langle y_{i}\rangle_{i=1}^{m}\cup FP\langle y_{i}\rangle_{i=1}^{m}\subset A$
satisfying the following conditions:
\[
t+\alpha\in\,^{*}\!A
\]
\[
s\alpha\in\,^{*}\!A
\]
for all $t\in FS\langle y_{i}\rangle_{i=1}^{m}$ and $s\in FP\langle y_{i}\rangle_{i=1}^{m}$.
Now from the above equation we have,
\[
\alpha\in\,^{*}\!A
\]
\[
t+\alpha\in\,^{*}\!A
\]
\[
t+\,^{*}\!\alpha\in\,^{**}\!A\,\left(\text{by transfer principle}\right)
\]
\[
\alpha+\,^{*}\!\alpha\in\,^{**}\!A
\]
\[
t+\alpha+\,^{*}\!\alpha\in\,^{**}\!A
\]
\[
s\alpha\in\,^{*}\!A.
\]
Again by transfer principle, choose $H_{m+1}\in\mathcal{P}_{f}\left(\mathbb{N}\right)$
where $\text{min}H_{m+1}>\text{max}H_{m}$, $y_{m+1}=\sum_{t\in H_{m+1}}x_{t}$
such that
\[
y_{m+1}\in A
\]
\[
t+y_{m+1}\in A
\]
\[
t+\alpha\in\,^{*}\!A
\]
\[
y_{m+1}+\alpha\in\,^{*}\!A
\]
\[
t+y_{m+1}+\alpha\in\,^{*}\!A
\]
\[
sy_{m+1}\in A.
\]
Hence $FS\langle y_{i}\rangle_{i=1}^{m+1}\cup FP\langle y_{i}\rangle_{i=1}^{m+1}\subset A$
and for all $t^{\prime}\in FS\langle y_{i}\rangle_{i=1}^{m}$ and
$s^{\prime}\in FP\langle y_{i}\rangle_{i=1}^{m}$,
\[
t^{\prime}+\alpha\in\,^{*}\!A
\]
\[
s^{\prime}\alpha\in\,^{*}\!A\,\left(\text{from fact \ref{fact}}\right).
\]
So the induction is complete and the theorem is proved.\\
\end{proof}

\noindent \textbf{Acknowledgment: }The author acknowledges the grant UGC-NET
SRF fellowship with id no. 421333 of CSIR-UGC NET December 2016.

\end{document}